\documentclass[a4paper, 12pt]{amsart}

\usepackage{amsfonts}
\usepackage{amsthm}
\usepackage{verbatim}
\usepackage{amsmath, amssymb}
\usepackage{enumerate}
%\usetikzlibrary{matrix, arrows}
\usepackage{graphicx}
\usepackage{listings}
\usepackage{color}
\usepackage{listings}
\usepackage{float}
\usepackage{multicol}
\usepackage{booktabs}
\usepackage{lscape}
\usepackage{cases}
\usepackage{multirow}
\usepackage{rotating}
\usepackage{setspace}
\usepackage{caption}
\usepackage{array}
\usepackage{thmtools, thm-restate}
\usepackage[percent]{overpic}
\usepackage[noadjust]{cite}
\usepackage[update,prepend]{epstopdf}
\usepackage[colorlinks,linkcolor=blue,citecolor=blue]{hyperref}

\usepackage{pinlabel} 

\setlength{\textwidth}{6.5in}
\setlength{\oddsidemargin}{0in}
\setlength{\evensidemargin}{0in}
\setlength{\parskip}{1ex plus 0.5ex minus 0.2ex}

\usepackage[top=2.5cm, bottom=2.5cm, left=2.5cm, right=2.5cm]{geometry}
\raggedbottom

\linespread{1.0}

\newtheorem{mydef}{Definition}

\newtheorem{thm}{Theorem}
\newtheorem{lemma}{Lemma}

%\newcolumntype{L}{>{\centering\arraybackslash}m{3cm}}

\newcommand{\Z}{\mathbb{Z}}

\newcommand{\Q}{\mathbb{Q}}

\newsavebox{\LRmat} % Box to store smallmatrix content
\savebox{\LRmat}{$\left(\begin{smallmatrix}2&1\\1&1\end{smallmatrix}\right)$}

\begin{document}

\title{The classification of quasi-alternating Montesinos links}

\author{Ahmad Issa}

\address{Department of Mathematics \\
         The University of Texas At Austin \\
         Austin, TX, 78712, USA}
\email{aissa@math.utexas.edu}

\begin{abstract}
  In this note, we complete the classification of quasi-alternating Montesinos links. We show that the quasi-alternating Montesinos links are precisely those identified independently by Qazaqzeh-Chbili-Qublan and Champanerkar-Ording. A consequence of our proof is that a Montesinos link $L$ is quasi-alternating if and only if its double branched cover is an L-space, and bounds both a positive definite and a negative definite $4$-manifold with vanishing first homology.
\end{abstract}

\maketitle

\section{Introduction}
Quasi-alternating links were defined by Ozsv\'{a}th-Szab\'{o} \cite[Definition 3.1]{MR2141852} as a natural generalisation of the class of alternating links.

\begin{mydef} The set $\mathcal{Q}$ of quasi-alternating links is the smallest set of links satisfying the following:
  \begin{itemize}
    \item The unknot U belongs to $\mathcal{Q}$.
    \item If $L$ is a link with a diagram containing a crossing $c$ such that
      \begin{enumerate}[(1)]
        \item both smoothings $L_0$ and $L_1$ of the link $L$ at the crossing $c$, as in Figure \ref{fig:crossing_resolution}, belong to $\mathcal{Q}$,
        \item $\mbox{det}(L_0), \mbox{det}(L_1) \ge 1$, and
        \item $\mbox{det}(L) = \mbox{det}(L_0) + \mbox{det}(L_1)$,
        \end{enumerate}
        then $L$ is in $\mathcal{Q}$. The crossing $c$ is called a quasi-alternating crossing.
  \end{itemize}
\end{mydef}

\begin{figure}[H]
  % \centerline{\includegraphics[width=230pt]{}}
  \begin{overpic}[width=230pt]{}
    \put (9, 4) {$L$}
    \put (48, 4) {$L_0$}
    \put (86, 4) {$L_1$}
  \end{overpic}
\caption{$L$ and its two resolutions $L_0$ and $L_1$ in a neighbourhood of $c$.}
\label{fig:crossing_resolution}
\end{figure}

Ozsv\'{a}th-Szab\'{o} showed that the class of non-split alternating links is contained in $\mathcal{Q}$ \cite[Lemma 3.2]{MR2141852}. Moreover, quasi-alternating links share a number of properties with alternating links, we list a few of these. For a quasi-alternating link $L$:
\begin{enumerate}[(i)]
  \item $L$ is homologically thin for both Khovanov homology and knot Floer homology \cite{MR2509750}.
  \item The double branched cover $\Sigma(L)$ of $L$ is an L-space \cite[Proposition 3.3]{MR2141852}.
  \item \label{property3} The $3$-manifold $\Sigma(L)$ bounds a smooth negative definite 4-manifold $W$ with $H_1(W) = 0$ \cite[Proof of Lemma 3.6]{MR2141852}.
\end{enumerate}
For some further properties see \cite{MR3283677}, \cite{MR3361152}, \cite{MR3361803} and \cite[Remark after Proposition 5.2]{MR3071132}.

Due to their recursive definition, it is difficult in general to determine whether or not a link is quasi-alternating. For example, there still remain examples of $12$-crossing knots with unknown quasi-alternating status \cite{JablanQATable}. Champanerkar-Kofman \cite{MR2495282} showed that the quasi-alternating property is preserved by replacing a quasi-alternating crossing with an alternating rational tangle. They used this to determine an infinite family of quasi-alternating pretzel links, which Greene later showed is the complete set of quasi-alternating pretzel links \cite{MR2592726}.

Qazaqzeh-Chbili-Qublan \cite{MR3319679} and Champanerkar-Ording \cite{MR3403210} independently generalised the sufficient conditions on pretzel links to obtain an infinite family of quasi-alternating Montesinos links. This family includes all examples of quasi-alternating Montesinos links found by Widmer \cite{MR2583805}. Furthermore, it was conjectured by Qazaqzeh-Chbili-Qublan that this family is the complete set of quasi-alternating Montesinos links. We mention that Watson \cite{MR2768652} gave an iterative surgical construction for constructing all quasi-alternating Montesinos links.

Some necessary conditions to be quasi-alternating in terms of the rational parameters of a Montesinos link were obtained in \cite{MR3319679} and \cite{MR3403210} based on the fact that a quasi-alternating link is homologically thin. Further conditions are described in \cite{MR3403210} coming from the fact that the double branched cover of a quasi-alternating link is an L-space. Some additional restrictions were found in \cite{MR3361152}.

Our main result is the following theorem which states that the quasi-alternating Montesinos links are precisely those found by Qazaqzeh-Chbili-Qublan \cite{MR3319679} and Champanerkar-Ording \cite{MR3403210}:
\begin{restatable}{thm}{mainthm}
  \label{thm:necessary}
  Let $L = M(e; t_1, \ldots, t_p)$ be a Montesinos link in standard form, that is, where $t_i = \frac{\alpha_i}{\beta_i} > 1$ and $\alpha_i, \beta_i > 0$ are coprime for all $i = 1,\ldots,p$. Then $L$ is quasi-alternating if and only if
  \begin{enumerate}[(1)]
    \item $e < 1$, or
    \item $e = 1$ and $\frac{\alpha_i}{\alpha_i - \beta_i} > \frac{\alpha_j}{\beta_j}$ for some $i, j$ with $i \neq j$, or
    \item $e > p - 1$, or
      \item $e = p - 1$ and $\frac{\alpha_i}{\alpha_i - \beta_i} < \frac{\alpha_j}{\beta_j}$ for some $i, j$ with $i \neq j$.
  \end{enumerate}
\end{restatable}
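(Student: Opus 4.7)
The sufficiency direction (that each of (1)--(4) implies quasi-alternating) is already established in \cite{MR3319679} and \cite{MR3403210}, so I will only prove necessity, and I do so by contrapositive: assuming $L$ is a standard-form Montesinos link for which none of (1)--(4) holds, I will show $L$ is not quasi-alternating. Suppose for contradiction that $L$ is quasi-alternating. Since the defining conditions of $\mathcal{Q}$ are symmetric under mirror, the mirror $\bar L$ is also quasi-alternating, and $\Sigma(\bar L) = -\Sigma(L)$. Applying property (iii) to both $L$ and $\bar L$ shows that $\Sigma(L)$ simultaneously bounds smooth negative definite \emph{and} positive definite 4-manifolds, each with vanishing first homology; by (ii), $\Sigma(L)$ is moreover an L-space. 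This two-sided definite filling, together with the L-space hypothesis, is the topological obstruction I will exploit.

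The second step is to bring in the Seifert-fibered structure. The double branched cover $\Sigma(L)$ is Seifert fibered over $S^2$, and the standard Montesinos diagram furnishes a canonical star-shaped plumbing $P$ bounding $\Sigma(L)$, with central-vertex weight determined by $e$ and legs determined by negative continued fraction expansions of $t_1, \ldots, t_p$. Depending on $e$, one of $P$ or the reverse-orientation plumbing $\overline{P}$ is definite; the excluded regime corresponds exactly to the range $1 \le e \le p-1$ where neither orientation of the natural plumbing is definite. Gluing the canonical plumbing to the hypothetical definite filling of the opposite sign produces a closed smooth definite 4-manifold $X$ with $H_1(X) = 0$. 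By Donaldson's diagonalization theorem the intersection form of $X$ embeds isometrically into a standard diagonal lattice $\pm\Z^N$, hence so does the intersection form of the plumbing.

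The third step, and the one I expect to contain the technical heart of the proof, is the combinatorial analysis of such a lattice embedding. One assigns vectors in $\pm\Z^N$ to the central vertex and to each leg vertex, and extracts arithmetic inequalities among $e$ and the $t_i$ from the length and orthogonality constraints encoded by the plumbing graph. In the interior range $2 \le e \le p-2$ a pigeonhole-type argument on the coordinates that the leg vectors must share with the central vector should force an immediate contradiction. The delicate boundary cases $e = 1$ and $e = p-1$ are where I anticipate the main obstacle: here embeddings can exist in principle, and the precise content of conditions (2) and (4) --- the inequalities $\alpha_i/(\alpha_i - \beta_i) > \alpha_j/\beta_j$ or its reverse --- should emerge as the exact combinatorial condition for the embedding to close up consistently across all legs. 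Extracting exactly these inequalities will likely require a careful case split on how pairs of leg vectors overlap with the central vector, possibly reinforced by the sharpness constraints on the Ozsv\'{a}th-Szab\'{o} correction terms of the L-space $\Sigma(L)$.
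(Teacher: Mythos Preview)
Your overall framework is right --- quasi-alternating implies both that $\Sigma(L)$ is an L-space and that it bounds definite $4$-manifolds of both signs with trivial $H_1$, and these are exactly the two obstructions the paper uses. But you have the roles of the two obstructions essentially reversed, and this creates a real gap.

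First, a factual slip: in the range $1\le e\le p-1$ the canonical star-shaped plumbing \emph{is} definite for one orientation (its sign is governed by the sign of $e-\sum_i \beta_i/\alpha_i$, not by $e$ alone). So you do get a Donaldson embedding of the plumbing lattice into a diagonal lattice in every case; the question is what to do with it.

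The main gap is your treatment of the interior range $2\le e\le p-2$. Pigeonhole does force two leg vectors to share a coordinate with the central vector, but that alone is not a contradiction: the argument the paper runs at $e=1$ converts ``two legs share a coordinate'' into a contradiction only by feeding in the failure of condition~(2), which rewrites as $\beta_i/\alpha_i+\beta_j/\alpha_j\le 1$ and hence $r+s\ge 1$ for the two leg fractions. This inequality is exactly the hypothesis of the Lecuona--Lisca lemmas (\cite[Lemmas~3.1, 3.2]{MR2782538}), which then pin down a square submatrix of the embedding matrix whose determinant violates Greene's surjectivity criterion. In the interior range you have no such inequality on the $t_i$, so the lattice combinatorics do not close up by themselves. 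The paper instead dispatches the entire interior range (and the subcase $e=1$, $e-\sum 1/t_i>0$) with the L-space obstruction: Laufer's rationality algorithm applied to the negative definite plumbing for $\overline{L}$ halts immediately at the central vertex because $p-e\ge 2$, showing $\Sigma(L)$ is not an L-space. No correction terms or $d$-invariant sharpness enters anywhere.

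So the concrete missing ingredients are: (i) the L-space/Laufer argument for $1<e<p-1$ (and for the $e=1$ subcase with $e-\sum 1/t_i>0$), and (ii) at $e=1$ with $e-\sum 1/t_i<0$, the Lecuona--Lisca lemmas that turn the pigeonhole overlap plus the inequality from $\neg(2)$ into a determinant obstruction via Lemma~\ref{lemma:greene_obstruction}.
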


As a corollary of our proof we obtain the following characterisation of the Montesinos links $L$ which are quasi-alternating in terms of their double branched covers $\Sigma(L)$:
\begin{restatable}{corol}{maincorol}
  \label{cor:qa}
  A Montesinos link $L$ is quasi-alternating if and only if
  \begin{enumerate}[(1)]
    \item $\Sigma(L)$ is an L-space, and
    \item there exist a smooth negative definite $4$-manifold $W_1$ and a smooth positive definite $4$-manifold $W_2$ with $\partial W_i = \Sigma(L)$ and $H_1(W_i) = 0$ for $i = 1,2$.
  \end{enumerate}
\end{restatable}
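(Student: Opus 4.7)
The forward direction will be immediate from the list of properties collected in the introduction. Property (ii) gives that $\Sigma(L)$ is an L-space, and property (iii) produces a smooth negative definite $W_1$ with $\partial W_1 = \Sigma(L)$ and $H_1(W_1) = 0$. For the positive definite filling $W_2$, I would observe that the quasi-alternating condition is symmetric in the two smoothings $L_0$ and $L_1$, so the mirror $\bar{L}$ is quasi-alternating whenever $L$ is. Applying (iii) to $\bar{L}$ yields a smooth negative definite $4$-manifold bounding $\Sigma(\bar{L}) = -\Sigma(L)$; reversing orientation supplies $W_2$.

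The converse is the substantive direction, and I would extract it from the proof of Theorem~\ref{thm:necessary} by contrapositive. Suppose $L = M(e; t_1,\ldots,t_p)$ is a Montesinos link in standard form that is not quasi-alternating; then by Theorem~\ref{thm:necessary} none of conditions (1)--(4) holds. The plan is to inspect the proof of the necessity direction of Theorem~\ref{thm:necessary} and verify that the obstructions it produces against $L$ being quasi-alternating are always of one of two purely $3$-dimensional kinds: either an obstruction to $\Sigma(L)$ being an L-space---via, e.g., the Ozsv\'ath--Szab\'o / Lisca--Stipsicz criterion for Seifert fibered spaces---or an obstruction, coming from Donaldson's diagonalization theorem applied to the natural plumbed fillings and their orientation reversals, to $\Sigma(L)$ bounding a smooth definite $4$-manifold with vanishing $H_1$ of one of the two signs. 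The former violates (1) of the corollary and the latter violates (2), so the contrapositive follows.

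The main obstacle is consequently internal to the proof of Theorem~\ref{thm:necessary} rather than to the corollary itself: the obstructions produced in each non--quasi-alternating parameter range, including the delicate boundary cases $e = 1$ and $e = p - 1$ in which conditions (2) and (4) fail only by a sharp inequality between $\alpha_i/(\alpha_i - \beta_i)$ and $\alpha_j/\beta_j$, must depend only on the homeomorphism type of $\Sigma(L)$ and not on finer knot invariants (such as Khovanov or knot Floer thinness). The symmetry between the pairs of conditions $(1),(2)$ and $(3),(4)$ of Theorem~\ref{thm:necessary} under mirroring corresponds precisely to the symmetry between negative and positive definite fillings appearing in (2) of the corollary, and recognising this correspondence is what allows the two-sided content of the corollary to match the mirror-invariant content of the theorem.
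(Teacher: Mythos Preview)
Your proposal is correct and matches the paper's own proof essentially verbatim: the forward direction is immediate from properties (ii) and (iii) together with mirror symmetry, and the converse is obtained by observing that the only obstructions deployed in the proof of Theorem~\ref{thm:necessary} are either the L-space obstruction (Lemma~\ref{lemma:laufer}) or the Donaldson-type obstruction to a definite filling with trivial $H_1$ (Lemma~\ref{lemma:greene_obstruction}). The only cosmetic discrepancy is that the paper's L-space obstruction runs through N\'emethi's rationality criterion and Laufer's algorithm rather than an Ozsv\'ath--Szab\'o / Lisca--Stipsicz criterion, but this is immaterial to the corollary.
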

Note that in Corollary \ref{cor:qa} and throughout, we assume all homology groups have $\Z$ coefficients.

In light of this corollary, Theorem \ref{thm:necessary} can also be seen as a classification of the L-space Seifert fibered spaces over $S^2$ which bound both positive and negative definite $4$-manifolds with vanishing first homology. To what extent Corollary \ref{cor:qa} generalises to non-Montesinos links remains an interesting question.

This work also gives a classification of the Seifert fibered space formal L-spaces. The notion of a formal L-space was defined by Greene and Levine \cite{MR3584256} as a 3-manifold analogue of quasi-alternating links. In fact, the double branched cover of a quasi-alternating link is an example of a formal L-space. In \cite{MR3604917}, Lidman and Sivek classified the quasi-alternating links of determinant at most $7$. In fact, they show that the formal L-spaces $M^3$ with $|H_1(M)| \le 7$ are precisely the double branched covers of quasi-alternating links with determinant at most $7$. In this same direction, as a consequence of Corollary \ref{cor:qa}, we have the following.

\begin{restatable}{corol}{formallspace} A Seifert fibered space over $S^2$ is a formal L-space if and only if it is the double branched cover of a quasi-alternating link.
\end{restatable}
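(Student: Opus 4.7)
The forward direction is immediate from the remark in the introduction that the double branched cover of any quasi-alternating link is a formal L-space, so all of the work lies in the converse. My plan for the converse is to reduce to Corollary \ref{cor:qa}.

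Given a Seifert fibered space $M$ over $S^2$ that is a formal L-space, I would first write $M = \Sigma(L)$ for some Montesinos link $L$; this is always possible since every Seifert fibered rational homology sphere over $S^2$ arises as such a double branched cover, and a formal L-space is in particular a rational homology sphere. It then suffices to verify the three hypotheses of Corollary \ref{cor:qa}: that $M$ is an L-space and that it bounds both a smooth negative definite and a smooth positive definite $4$-manifold, each with vanishing first homology. I plan to verify the first two by induction on the recursive definition of a formal L-space from Greene--Levine: the Heegaard Floer surgery exact triangle propagates the L-space property at each inductive step, while attaching a single $2$-handle at each step produces the required negative definite bound with trivial first homology, paralleling Ozsv\'{a}th--Szab\'{o}'s proof of property (iii) in the quasi-alternating case. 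For the positive definite bound, I would observe that the class of formal L-spaces is closed under orientation reversal, because the defining surgery triad $(Y_0, Y_1, Y)$ satisfies the additivity $|H_1(Y)| = |H_1(Y_0)| + |H_1(Y_1)|$, a condition symmetric under orientation reversal, and $-S^3 = S^3$. Thus $-M$ is also a formal L-space and, by the previous step, bounds a negative definite $4$-manifold $W$ with $H_1(W) = 0$; its orientation reverse $\overline{W}$ is then a positive definite $4$-manifold bounded by $M$ with trivial first homology.

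The main step will be setting up the two inductive arguments cleanly from the Greene--Levine definition, but each mirrors closely the corresponding argument for quasi-alternating links, so I do not anticipate serious technical difficulty. Once the L-space condition and both definiteness conditions are verified, Corollary \ref{cor:qa} yields directly that $L$ is quasi-alternating, as required.
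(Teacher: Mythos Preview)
Your proposal is correct and follows essentially the same route as the paper: both reduce the hard implication to Corollary~\ref{cor:qa} by observing that Ozsv\'{a}th--Szab\'{o}'s inductive arguments for quasi-alternating links (that the double branched cover is an L-space and bounds a negative definite $4$-manifold with trivial $H_1$) go through verbatim for formal L-spaces. The paper simply asserts that the Ozsv\'{a}th--Szab\'{o} proofs apply to all formal L-spaces, whereas you spell out the induction and make the orientation-reversal step for the positive definite bound explicit; these are the same argument at different levels of detail.
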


Corollary \ref{cor:qa} also seems significant given the recent independent characterisations of alternating knots by Greene \cite{MR3694566} and Howie \cite{MR3654110}. A non-split link is alternating if and only if it bounds negative definite and positive definite spanning surfaces (which are the checkerboard surfaces). The double branched cover of $B^4$ over such a surface is a definite $4$-manifold of the appropriate sign. Generalising this, a quasi-alternating link has the property that it bounds a pair of surfaces in $B^4$ with double branched covers a positive definite and a negative definite $4$-manifold (these surfaces cannot be embedded in $S^3$ in general). Corollary \ref{cor:qa} shows that among Montesinos links with double branched covers which are L-spaces, this property characterises those which are quasi-alternating.

Our approach to proving Theorem \ref{thm:necessary} follows that of Greene \cite{MR2592726} on the determination of quasi-alternating pretzel links. One of Greene's main strategies is as follows. Suppose $L$ is a quasi-alternating Montesinos link such that $\Sigma(L)$ is the oriented boundary of the standard negative definite plumbing $X^4$. Since the property of being quasi-alternating is closed under reflection, by property (\ref{property3}) above, $-\Sigma(L) = \Sigma(\overline{L})$ bounds a negative definite $4$-manifold $W$ with $H_1(W) = 0$. By Donaldson's theorem \cite{MR910015}, the smooth closed negative definite $4$-manifold $X \cup W$ has diagonalisable intersection form. Hence, $H_2(X)/\mbox{Tors} \hookrightarrow H_2(X \cup W)/\mbox{Tors}$ is an embedding of the intersection lattice of $X$ into the standard negative diagonal lattice. Moreover, using that $H_1(W)$ is torsion free, it is shown that if $A$ is a matrix representing the lattice embedding then $A^T$ must be surjective.

When $L$ is a pretzel link of a certain form, Greene analyses the possible embeddings of the intersection lattice of $X$ into a negative diagonal lattice and shows that the aforementioned surjectivity condition cannot hold, and hence the link cannot be quasi-alternating. Our main contribution is to argue for more general Montesinos links $L$ that there is no lattice embedding for which $A^T$ is surjective. Key to our argument are some results on lattice embeddings by Lecuona-Lisca \cite{MR2782538}. The condition we obtain combined with an obstruction based on $\Sigma(L)$ being an L-space leads to the precise necessary conditions to complete the determination of quasi-alternating Montesinos links.

\section{Preliminaries}\label{sec:preliminaries}
We briefly recall some material on Montesinos links and plumbings. See \cite{MR3403210} or \cite{MR3156509} for further detail on Montesinos links, and \cite{MR518415} for more on plumbings. The Montesinos link $M(e; t_1, \ldots, t_p)$, where $t_i = \frac{\alpha_i}{\beta_i} \in \mathbb{Q}$ with $\alpha_i > 1$ and $\beta_i$ coprime integers, and $e$ is an integer, is given by the diagram in Figure \ref{fig:montesinos}. In the figure, each box labelled $t_i$ represents the corresponding rational tangle. The $0$ rational tangle is shown in Figure \ref{fig:rational_tangle}. Introducing an additional positive (resp. negative) half-twist to the bottom of an $a/b$ rational tangle produces a rational tangle represented by $a/b + 1$ (resp. $a/b - 1$), see Figure \ref{fig:rational_tangle}. Rotating (in either direction) a rational tangle represented by $t \in \mathbb{Q} \cup \{1/0\}$ by $90$ degrees produces the rational tangle represented by $-1/t$. The rational tangle represented by any $a/b \in \mathbb{Q} \cup \{1/0\}$ can be obtained from the $0$ rational tangle by a sequence of these two operations. See \cite{MR2107964} for a more thorough treatment of rational links. Note however that an $a/b$ rational tangle with our conventions corresponds to a $b/a$ rational tangle in \cite{MR2107964}.

We also note that with our conventions for a Montesinos link $M(e; t_1, \ldots, t_p)$, the integer $e$ has opposite sign to that used by Champanerkar-Ording \cite{MR3403210}, and agrees with that of Qazaqzeh-Chbili-Qublan \cite{MR3319679} and Greene \cite{MR2592726}.

\begin{figure}[H]
  % \centerline{\includegraphics[height=150pt]{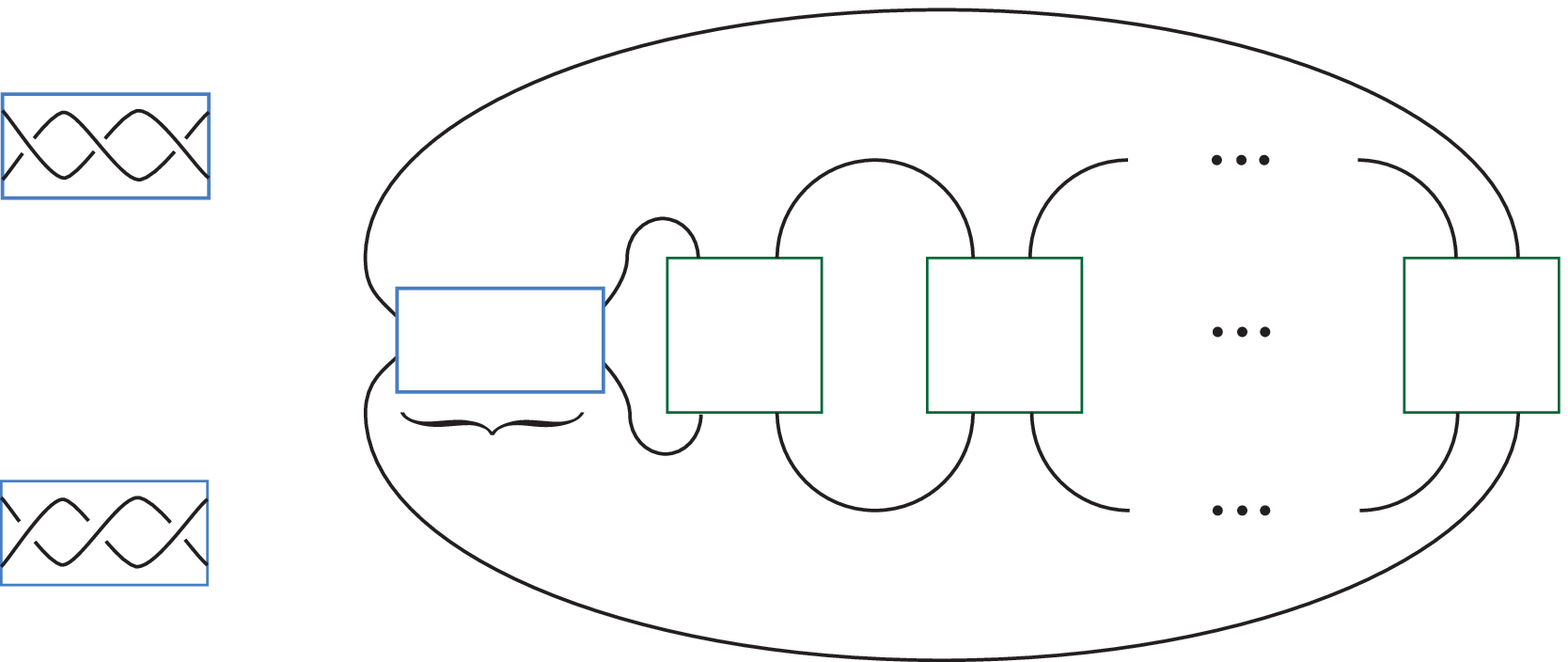}}
  \begin{overpic}[height=150pt]{diagrams/montesinos_with_inset.eps}
    \put (26, 11) {\small $|e|$ crossings}
    \put (-4, 14) {\small $e < 0$ $(e=-3)$}
    \put (-2, 26) {\small $e < 0$ $(e=3)$}
    \put (46.5, 20) {\large $t_1$}
    \put (63, 20) {\large $t_2$}
    \put (93.5, 20) {\large $t_p$}
  \end{overpic}
\caption{The Montesinos link $M(e; t_1, \ldots, t_p)$ where a box labelled $t_i$ represents a rational tangle corresponding to $t_i$. The crossing type of the $|e|$ crossings depends on the sign of $e$, with the two possibilities shown on the left.}
\label{fig:montesinos}
\end{figure}

\begin{figure}[H]
  %\centerline{\includegraphics[height=120pt]{}}
  \begin{overpic}[height=120pt]{}
    \put (10, 2) {\large $0$}
    \put (34.5, 2) {\large $a/b$}
    \put (35, 15.5) {$a/b$}
    \put (57, 2) {\large $(a/b) + 1$}
    \put (60.8, 17) {$a/b$}
    \put (85, 2) {\large $-b/a$}
    \put (88, 14.5) {\rotatebox{90}{$a/b$}}
  \end{overpic}
\caption{From left to right: the $0$ rational tangle, an abstract representation of a $a/b$ rational tangle, the $\frac{a}{b} + 1$ rational tangle, and the $-b/a$ rational tangle.}
\label{fig:rational_tangle}
\end{figure}

The Montesinos link $M(e; t_1, \ldots, t_p)$ is isotopic to $M(e + 1; t_1, \ldots, t_{i-1}, t_i', t_{i+1}, \ldots, t_p)$ where $t_i' = \frac{\alpha_i}{\beta_i + \alpha_i}$, and is also isotopic to $M(e - 1; t_1, \ldots, t_{i-1}, t_i', t_{i+1}, \ldots, t_p)$, where $t_i' = \frac{\alpha_i}{\beta_i - \alpha_i}$. Hence, a Montesinos link is isotopic to one in \emph{standard form}, that is, of the form $M(e; t_1, \ldots, t_p)$ where $t_i > 1$ for all $i$.

Let $L = M(e; t_1, \ldots, t_p)$ where $t_i < -1$ for all $i$. Note that any Montesinos link can be put into this form. For each $i$, there is a unique continued fraction expansion $$t_i = [a_1^i, \ldots, a_{h_i}^i] := a_1^i - \cfrac{1}{a_2^i - \cfrac{1}{\begin{aligned}\ddots \,\,\, & \\[-3ex] & a_{h_i-1}^i - \cfrac{1}{a_{h_i}^i} \end{aligned}}},$$
where $h_i \ge 1$ and $a_j^i \le -2$ for all $j \in \{1, \ldots, h_i\}$.

\begin{figure}[H]
  \begin{overpic}[height=150pt]{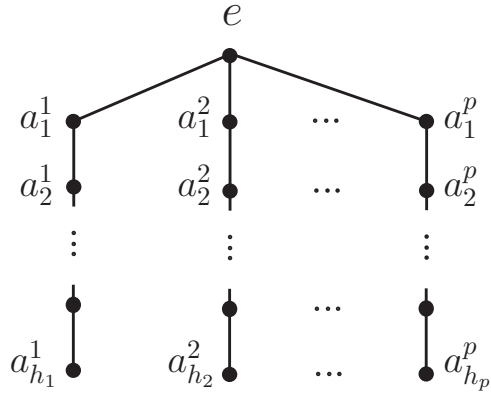}
    \put (45, 73) {\Large $e$}
    \put (4, 52) {\large $a_1^1$}
    \put (4, 38) {\large $a_2^1$}
    \put (2, 2) {\large $a_{h_1}^1$}
    
    \put (36, 52) {\large $a_1^2$}
    \put (36, 38) {\large $a_2^2$}
    \put (34, 2) {\large $a_{h_2}^2$}
    
    \put (90, 52) {\large $a_1^p$}
    \put (90, 38) {\large $a_2^p$}
    \put (90, 2) {\large $a_{h_p}^p$}    

   % \put (-5, 16) {\large $a_{h_1-1}^1$}

  \end{overpic}
  \caption{The weighted star-shaped plumbing graph $\Gamma$.}
  \label{fig:plumbing}
\end{figure}

The double branched cover $\Sigma(L)$ of $L$ is the oriented boundary of the $4$-dimensional plumbing $X_\Gamma$ of $D^2$-bundles over $S^2$ described by the weighted star-shaped graph $\Gamma$ shown in Figure \ref{fig:plumbing}. We call $\Gamma$ the standard star-shaped plumbing graph for $L$. The $i$th leg of $\Gamma$ corresponding to $t_i$ is the linear subgraph generated by the vertices labelled with weights $a_1^i, \ldots, a_{h_i}^i$. The degree $p$ vertex labelled with weight $e$ is called the central vertex. Denote the vertices of $\Gamma$ by $v_1, v_2, \ldots, v_k$. The zero-sections of the $D^2$-bundles over $S^2$ corresponding to each of $v_1, \ldots, v_k$ in the plumbing together form a natural spherical basis for $H_2(X_\Gamma)$. With respect to this basis, the intersection form of $X_\Gamma$ is given by the weighted adjacency matrix $Q_\Gamma$ with entries $Q_{ij}$, $1 \le i,j \le k$ given by

$$Q_{ij} = \begin{cases} 
      \text{w}(v_i), & \mbox{if }i = j \\
      1, & \mbox{if }v_i\mbox{ and }v_j\mbox{ are connected by an edge} \\
      0, & \mbox{otherwise} 
   \end{cases},
$$
where $\text{w}(v_i)$ is the weight of vertex $v_i$. We call $(\Z^k, Q_\Gamma)$ the intersection lattice of $X_{\Gamma}$ (or of $\Gamma$).

\section{Results}
Equivalent sufficient conditions for a Montesinos link to be quasi-alternating were given in \cite[Theorem 5.3]{MR3403210} and \cite[Theorem 3.5]{MR3319679}. The goal of this section is to prove Theorem \ref{thm:necessary} which states that these sufficient conditions for a Montesinos link to be quasi-alternating are also necessary conditions.

\begin{lemma}\label{lemma:laufer} Let $L = M(e; t_1, \ldots, t_p)$, $p \ge 3$, be a Montesinos link in standard form, i.e. where $t_i = \frac{\alpha_i}{\beta_i} > 1$ and $\alpha_i, \beta_i > 0$ are coprime for all $i$. Suppose that $e \le p - 2$ and $e - \sum_{i=1}^p \frac{1}{t_i} > 0$ (in particular $e \ge 1$). Then $\Sigma(L)$ is not an L-space, and therefore $L$ is not quasi-alternating.
\end{lemma}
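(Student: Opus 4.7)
Since being an L-space is a necessary condition for a link to be quasi-alternating (property (ii) in the introduction), it suffices to prove that $\Sigma(L)$ is not an L-space under the stated hypotheses. Because the L-space property is orientation-independent, I will instead show that $-\Sigma(L) = \Sigma(\overline{L})$ is not an L-space, where $\overline{L} = M(-e;\, -t_1, \ldots, -t_p)$ is the mirror of $L$. Each $-t_i < -1$, so $\overline{L}$ is already in the form used to build the standard plumbing of Figure~\ref{fig:plumbing}; let $\Gamma$ denote the resulting weighted star-shaped graph, with central vertex $v_0$ of weight $-e$ and leg weights given by the continued-fraction expansions of the $-t_i$ (all entries $\leq -2$).

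The first step is to verify that $\Gamma$ is negative definite. A Schur-complement computation at the central vertex identifies negative definiteness with the condition $(-e) - \sum_i 1/(-t_i) < 0$, which simplifies to $e - \sum_i 1/t_i > 0$, exactly the second hypothesis of the lemma. Then, since every star-shaped graph is almost rational (one can decrease the central weight sufficiently to produce a rational graph), the theorem of N\'emethi (building on earlier work of Ozsv\'ath-Szab\'o on plumbed 3-manifolds) applies: the boundary of an almost rational negative definite plumbing is an L-space if and only if the graph is a \emph{rational graph} in the sense of Artin.

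To show $\Gamma$ is not rational, I would apply Laufer's algorithm for the fundamental cycle. Starting with $Z_0 = v_0$ and adding the $p$ first-leg vertices $v_1^i$ one at a time, each addition has intersection $1$ with the current cycle, which is consistent with rationality. After these $p$ steps, the cycle
$$Z_p = v_0 + \sum_{i=1}^p v_1^i \quad\text{satisfies}\quad Z_p \cdot v_0 = -e + p \geq 2,$$
using $e \leq p - 2$. Since $Z_p \cdot v_0 > 0$, Laufer's process cannot terminate without adding $v_0$ at some later step. The key point is that every non-central vertex $v_j^i$ with $j \geq 2$ is non-adjacent to $v_0$, so subsequent additions of such vertices leave $Z \cdot v_0$ unchanged, while adding further copies of any $v_1^i$ only increases it. Hence whenever $v_0$ is finally re-added, the current intersection is still at least $2$, contributing a strictly positive increment to $p_a$ and violating Laufer's unit-intersection criterion. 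Thus $\Gamma$ is not rational, and so $-\Sigma(L)$, and hence $\Sigma(L)$, is not an L-space.

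The main technical content lies in two places: the Schur-complement computation identifying $e - \sum 1/t_i > 0$ as precisely the right sign condition for $\Gamma$ to be negative definite (so that N\'emethi's L-space criterion applies), and the monotonicity of $Z \cdot v_0$ during Laufer's algorithm, which forces the re-addition of $v_0$ with intersection $\geq 2$ regardless of the order in which the remaining leg vertices are processed. The rest is a direct appeal to the rational graph characterisation of L-spaces among almost rational negative definite plumbings.
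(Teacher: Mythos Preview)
Your argument is correct and follows the same strategy as the paper: pass to the mirror $\overline{L}$, observe that its standard plumbing graph $\Gamma$ is negative definite (from the sign of $e-\sum_i 1/t_i$) and almost rational, and then apply Laufer's algorithm together with N\'emethi's L-space criterion to show $\Gamma$ is not rational via the pairing $p-e\ge 2$ at the central vertex. The only difference is cosmetic: the paper initialises Laufer's sequence at the reduced cycle $K_0=\sum_v[\Sigma_v]$, so that $\langle PD[K_0],[\Sigma_{v_0}]\rangle=p-e\ge 2$ already at step zero and the algorithm halts immediately; starting instead from $Z_0=v_0$ as you do is equally valid, but your monotonicity paragraph is then unnecessary, since at $Z_p$ you are free to select $v_0$ as the next vertex in the computation sequence and stop.
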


\begin{proof} The reflection of $L$ is given by $\overline{L} = M(e'; t_1', \ldots, t_p') = M(-e; -t_1, \ldots, -t_p)$. The space $\Sigma(\overline{L})$ is the oriented boundary of a plumbing $X_\Gamma$ corresponding to the standard star-shaped plumbing graph $\Gamma$ for $\overline{L}$. Since $e' - \sum_{i=1}^p \frac{1}{t_i'} = -\left(e - \sum_{i=1}^p \frac{1}{t_i}\right) < 0$, by \cite[Theorem 5.2]{MR518415}, $X_\Gamma$ has negative definite intersection form.

  Since $X_{\Gamma}$ is negative definite and $\Gamma$ is almost-rational, by \cite[Theorem 6.3]{MR2140997} we have that $\Sigma(\overline{L})$ is an L-space if and only if $X_\Gamma$ is a rational surface singularity (more generally, see \cite{Nemethi}). Note that $\Gamma$ is almost-rational since by sufficiently decreasing the weight of the central vertex we obtain a plumbing graph satisfying $-\text{w}(v) \ge \text{deg}(v)$ for all vertices $v$, where $\text{w}(v)$ denotes the weight of $v$, and such a graph is rational (for details see \cite[Example 8.2(3)]{MR2140997}).

  Laufer's algorithm \cite[Section 4]{MR0330500} can be used to determine whether the negative definite plumbing $X_\Gamma$ is a rational surface singularity as follows. Let $v_1, \ldots, v_k$ be the vertices of $\Gamma$ and for $i \in \{1,\ldots,k\}$, let $[\Sigma_{v_i}] \in H_2(X_\Gamma)$ be the spherical class naturally associated to $v_i$. The algorithm is as follows (see \cite[Section 3]{MR2425720} for a similar formulation).
  \begin{enumerate}
    \item Let $K_0 = \sum_{i=1}^k [\Sigma_{v_i}] \in H_2(X_\Gamma)$.
    \item In the $i$th step, consider the pairings $\langle PD[K_i], [\Sigma_{v_j}] \rangle$, for $j \in \{1,\ldots,k\}$. Note that these pairings may be evaluated using the adjacency matrix $Q$. If for some $j$ the pairing is at least $2$ then the algorithm stops and $X_\Gamma$ is not a rational surface singularity. If for some $j$, the pairing is equal to $1$, then set $K_{i+1} = K_i + [\Sigma_{v_j}]$ and go to the next step. Otherwise all pairings are non-positive, the algorithm stops and $X_\Gamma$ is a rational surface singularity.
  \end{enumerate}

  Applying Laufer's algorithm to $X_\Gamma$, we claim that the algorithm terminates at the $0^{\mbox{th}}$ step. To see this, note that for $v$ the central vertex of $\Gamma$, $\langle PD[K_0], [\Sigma_{v}]\rangle = p - e$ (each vertex adjacent to $v$ contributes $1$, the central vertex contributes $-e$). By assumption $e \le p - 2$ so $\langle PD[K_0], [\Sigma_{v}]\rangle = p - e \ge 2$. Hence, the algorithm terminates, we conclude that $X_{\Gamma}$ is not a rational surface singularity and hence $\Sigma(\overline{L})$ is not an L-space. Therefore $\Sigma(L)$ is not an L-space.
\end{proof}

The following lemma will provide an obstruction to a Montesinos link being quasi-alternating.
\begin{lemma}[{\cite[Lemma 2.1]{MR2592726}}]\label{lemma:greene_obstruction}
  Suppose that $X$ and $W$ are a pair of 4-manifolds, $\partial X = -\partial W = Y$ is a rational homology sphere, and $H_1(W)$ is torsion-free. Express the map $H_2(X)/\mbox{Tors} \rightarrow H_2(X \cup W)/\mbox{Tors}$ with respect to a pair of bases by the matrix $A$. This map is an inclusion, and $A^T$ is surjective. In particular, if some $k$ rows of $A$ contain all the non-zero entries of some $k$ of its columns, then the induced $k \times k$ minor has determinant $\pm 1$.
\end{lemma}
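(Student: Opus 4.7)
The plan is to prove the lemma in three stages: first show $A$ is injective; then reduce surjectivity of $A^T$ to a homological surjectivity statement via Poincar\'e--Lefschetz duality; and finally deduce the determinantal consequence from a block decomposition.

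For injectivity, I would apply Mayer--Vietoris to the decomposition $X \cup_Y W$. Since $Y$ is a rational homology sphere, $H_2(Y) = 0$ and $H_1(Y)$ is finite, so the sequence
\[
0 \to H_2(X) \oplus H_2(W) \to H_2(X \cup W) \to H_1(Y)
\]
shows that $H_2(X) \oplus H_2(W) \hookrightarrow H_2(X \cup W)$ is injective; in particular $A$ is injective. Since $A$ is a map of free abelian groups, $A^T$ is surjective if and only if $\mbox{coker}(A)$ is torsion-free, equivalently (by the universal coefficient theorem) the restriction map $H^2(X \cup W)/\mbox{Tors} \to H^2(X)/\mbox{Tors}$ is surjective.

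Next I would use Poincar\'e duality on the closed $4$-manifold $X \cup W$ together with Lefschetz duality on the pair $(X, Y)$ to identify this restriction map, modulo torsion, with the natural map
\[
H_2(X \cup W) \longrightarrow H_2(X \cup W, W) \cong H_2(X, Y),
\]
where the second isomorphism is given by excision. It thus suffices to show that $H_2(X \cup W) \to H_2(X \cup W, W)$ is surjective. From the long exact sequence of the pair $(X \cup W, W)$, its cokernel equals $\ker(H_1(W) \to H_1(X \cup W))$. A Mayer--Vietoris chase identifies this kernel with the image of $\ker(H_1(Y) \to H_1(X))$ under the map $H_1(Y) \to H_1(W)$; this image is simultaneously torsion (as a quotient of $H_1(Y)$) and torsion-free (as a subgroup of $H_1(W)$, by hypothesis), hence zero.

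For the ``in particular'' statement, after reordering bases I may assume the specified $k$ rows and columns are the first ones, so that $A$ takes block form $\left(\begin{smallmatrix} B & C \\ 0 & D \end{smallmatrix}\right)$ with $B$ the prescribed $k \times k$ minor. Then $A^T = \left(\begin{smallmatrix} B^T & 0 \\ C^T & D^T \end{smallmatrix}\right)$, and surjectivity of $A^T$ projected onto the first $k$ target coordinates forces $B^T : \Z^k \to \Z^k$ to be surjective, whence $\det B = \pm 1$. The main subtlety I anticipate is the duality identification in the preceding paragraph: checking that Poincar\'e--Lefschetz duality really carries restriction of cohomology to the pair-inclusion map on homology, and tracking the torsion through the universal-coefficient and exact sequences so that the equivalence between ``$A^T$ surjective'' and the resulting homological surjectivity genuinely holds.
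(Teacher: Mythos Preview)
The paper does not prove this lemma; it is quoted from Greene \cite[Lemma 2.1]{MR2592726} and used as a black box. Your argument is correct and is essentially the standard proof (and is the approach Greene takes): Mayer--Vietoris for injectivity, Poincar\'e--Lefschetz duality together with the long exact sequence of the pair $(X\cup W, W)$ and a Mayer--Vietoris chase for surjectivity of $A^T$, and the elementary block-matrix argument for the minor statement. The only point needing care is precisely the one you flag --- that the duality isomorphisms carry cohomology restriction to the relative-inclusion map on homology --- but this is a routine naturality check.
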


The following two technical lemmas will be useful when we apply the obstruction to being quasi-alternating based on Lemma \ref{lemma:greene_obstruction}.
\begin{lemma}[{\cite[Lemma 3.1]{MR2782538}}]\label{lemma:rigidity} Suppose $-1/r = [a_1, \ldots, a_n]$ and $-1/s = [b_1, \ldots, b_m]$ where $r + s = 1$. Consider a weighted linear graph $\Psi$ having two connected components, $\Psi_1$ and $\Psi_2$, where $\Psi_1$ consists of $n$ vertices $v_1, \ldots, v_n$ with weights $a_1, \ldots, a_n$ and $\Psi_2$ of $m$ vertices $w_1, \ldots, w_m$ with weights $b_1, \ldots, b_m$. Moreover, suppose that there is an embedding of the lattice $(\Z^{n+m}, Q_\Psi)$ into $(\Z^k, -\mbox{Id})$, with basis $e_1, \ldots, e_k$. For $S$ a subset of vertices of $\Psi$, define
  $$U_S = \{e_i\, |\, e_i \cdot v \neq 0\mbox{ for some }v \in S\}.$$
Suppose further that $e_1 \in U_{v_1} \cap U_{w_1}$ and $U_\Psi = \{e_1, \ldots, e_k\}$. Then $U_{\Psi_1} = U_{\Psi_2}$ and $k = n + m$.
\end{lemma}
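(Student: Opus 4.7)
The plan is to proceed by induction on $n+m$, exploiting orthogonality of the two chains inside the negative diagonal lattice $(\Z^k, -\mathrm{Id})$ together with a determinantal constraint imposed by $r+s=1$. Writing $v_j = \sum_i c_{j,i} e_i$ and $w_\ell = \sum_i d_{\ell,i} e_i$, the Gram-matrix relations become $\sum_i c_{j,i}^2 = -a_j \ge 2$, $\sum_i c_{j,i} c_{j+1,i} = -1$, $\sum_i c_{j,i} c_{j',i} = 0$ for non-adjacent $j,j'$, and the cross-chain condition $\sum_i c_{j,i} d_{\ell,i} = 0$ (with analogous relations on the $d$'s); the hypothesis reads $c_{1,1}, d_{1,1} \neq 0$ together with the saturation condition $U_\Psi = \{e_1,\ldots,e_k\}$. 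As a first observation, for a Hirzebruch--Jung expansion $-1/r = [a_1,\ldots,a_n]$ with $a_i \le -2$, the quantity $|\det Q_{\Psi_1}|$ equals the denominator of $r$ in lowest terms; since $r+s=1$ makes $r$ and $s$ share a common denominator $q$, one has $|\det Q_{\Psi_1}| = |\det Q_{\Psi_2}| = q$ and $|\det Q_\Psi| = q^2$, so the target $k = n+m$ amounts to asking that the image of $\Psi$ be a full-rank sublattice of $(\Z^k, -\mathrm{Id})$.

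The base case $n = m = 1$ is handled directly: $a_1 = b_1 = -2$ forces each of $v_1$ and $w_1$ to be a sum of exactly two distinct $\pm e_i$, and orthogonality $\sum_i c_{1,i} d_{1,i} = 0$ combined with $c_{1,1}, d_{1,1} \neq 0$ and saturation pins down, up to reindexing and a choice of $\varepsilon = \pm 1$, $v_1 = e_1 + \varepsilon e_2$ and $w_1 = e_1 - \varepsilon e_2$, giving $U_{\Psi_1} = U_{\Psi_2} = \{e_1, e_2\}$ and $k = 2$. For the inductive step, I would locate a basis vector $e_{i_0} \neq e_1$ that appears in exactly one vertex of $\Psi$; subtracting $c_{j,i_0} e_{i_0}$ from that vertex increases its self-intersection by one while preserving its intersection pattern with all other vertices, thereby reducing to a pair of shorter Hirzebruch--Jung chains whose continued-fraction values should again satisfy a complementarity relation of the form $r'+s = 1$ (or $r+s'=1$), controlled by the classical Riemenschneider duality of the two expansions. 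The inductive hypothesis then yields $U_{\Psi_1'} = U_{\Psi_2}$ and $k' = n' + m$, from which $U_{\Psi_1} = U_{\Psi_2}$ and $k = n+m$ follow by reinstating $e_{i_0}$.

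The main obstacle is producing such an extractable $e_{i_0}$. The competing demands $\sum_i c_{1,i} c_{2,i} = -1$, $\sum_i c_{1,i} d_{\ell,i} = 0$ with $c_{1,1} d_{1,1} \neq 0$, and $\sum_i c_{1,i}^2 = -a_1$ bound the overlap of $U_{v_1}$ with the other supports, and a case analysis on whether some $|a_i|$ or $|b_j|$ is at least $3$ (versus the all-$(-2)$ degenerate case, which reduces to or closely mimics the base case) should force the existence of a privately-supported basis vector at one end of one chain. The subsequent bookkeeping — verifying that after extracting $e_{i_0}$ the saturation condition survives, the shared-basis-vector hypothesis transfers to the correct endpoint of the reduced chain, and the reduced pair of continued fractions still satisfies the complementarity $r'+s=1$ — is where the identity $r+s=1$ is used essentially, via the rigidity of Riemenschneider duality under single-step reductions of Hirzebruch--Jung chains.
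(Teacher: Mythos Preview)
The paper does not contain a proof of this lemma: it is quoted as \cite[Lemma 3.1]{MR2782538} (Lecuona--Lisca) and used as a black box in the proof of Theorem~\ref{thm:necessary}. There is therefore no ``paper's own proof'' to compare your proposal against.

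On the merits of your sketch as a standalone argument: the inductive strategy is reasonable in spirit, but as written it is not a proof, and you essentially say so yourself. The existence of an extractable basis vector $e_{i_0}$ supported in a single vertex is asserted (``should force the existence'') rather than established, and this is precisely the crux. Several subsidiary points are also unverified. Your reduction step ``subtracting $c_{j,i_0} e_{i_0}$ \ldots increases its self-intersection by one'' tacitly assumes $|c_{j,i_0}| = 1$, which has not been checked; even granting that, the modified vertex may acquire weight $-1$, which falls outside the Hirzebruch--Jung regime $a_i \le -2$ and invalidates the continued-fraction interpretation. The claim that the reduced pair still satisfies $r' + s = 1$ ``via Riemenschneider duality'' is stated but not argued: Riemenschneider duality relates the string for $r$ to the string for $1-r$, not to an arbitrary one-step modification of one chain while the other is left untouched. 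Finally, the transfer of the hypotheses $e_1 \in U_{v_1} \cap U_{w_1}$ and $U_\Psi = \{e_1,\ldots,e_k\}$ to the reduced configuration is delicate when the extraction occurs at $v_1$ or $w_1$, or when $e_{i_0} = e_1$. If you wish to complete this line, you should consult the original Lecuona--Lisca argument, which organises the induction and the case analysis with enough care to close these gaps.
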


\begin{lemma}[{\cite[Lemma 3.2]{MR2782538}}]\label{lemma:shorten_legs}
  Let $-1/r = [a_1, \ldots, a_n]$ and $-1/s = [b_1, \ldots, b_m]$ be such that $r + s \ge 1.$ Then there exists $n_0 \le n$ and $m_0 \le m$ such that $-1/r_0 = [a_1, \ldots, a_{n_0}]$ and $-1/s_0 = [b_1, \ldots, b_{m_0}]$ satisfy $r_0 + s_0 = 1$.
\end{lemma}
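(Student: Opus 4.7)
The plan is to prove Lemma~\ref{lemma:shorten_legs} by induction on $n+m$, using the Farey-neighbour property of consecutive continued-fraction convergents as the central tool. Writing $r_i = p_i/q_i$ in lowest terms for the rational determined by $-1/r_i = [a_1,\ldots,a_i]$, and analogously $s_j = P_j/Q_j$, a routine induction using $a_i \le -2$ shows $0 < r_1 < r_2 < \cdots < r_n = r < 1$, and the continued-fraction recurrence gives $p_i q_{i-1} - p_{i-1} q_i = 1$, so that consecutive convergents $(r_{i-1}, r_i)$ are Farey neighbours; the analogous statements hold for the $s_j$. The base case $n = m = 1$ is immediate: since $r, s \le 1/2$, the hypothesis $r + s \ge 1$ forces $r = s = 1/2$, and $(n_0, m_0) = (1,1)$ suffices.

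For the inductive step, if $r + s = 1$ take $(n_0, m_0) = (n, m)$; otherwise $r + s > 1$ strictly, and the plan is to reduce to a pair with strictly smaller total length still satisfying the hypothesis, then invoke the inductive hypothesis. The central claim enabling this reduction is that at least one of $r_{n-1} + s \ge 1$ (when $n \ge 2$) or $r + s_{m-1} \ge 1$ (when $m \ge 2$) holds. Given the claim, applying the inductive hypothesis to the resulting shorter pair produces truncations $(n_0, m_0)$ of the original data with $r_{n_0} + s_{m_0} = 1$, since any truncation of $r_{n-1}$ is also a truncation of $r$.

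To prove the claim, suppose first that $n, m \ge 2$ and for contradiction that both inequalities fail, so $r_{n-1} + s < 1$ and $r + s_{m-1} < 1$. Combined with the strict inequality $r + s > 1$, the first assumption places $1 - s$ in the open interval $(r_{n-1}, r_n)$, and since $(r_{n-1}, r_n)$ are Farey neighbours, the classical mediant bound forces the denominator $Q_m$ of $1 - s$ to satisfy $Q_m \ge q_{n-1} + q_n$. Symmetrically, $q_n \ge Q_{m-1} + Q_m$. Substituting the second inequality into the first yields $Q_m \ge q_{n-1} + Q_{m-1} + Q_m$, contradicting $q_{n-1}, Q_{m-1} \ge 1$. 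The edge case $n = 1$ (and the symmetric $m = 1$) is dispatched directly: writing $r = 1/c$ with $c = |a_1| \ge 2$, the hypothesis $r + s \ge 1$ rearranges to $Q_m \ge c(Q_m - P_m) \ge c$, whereas if $r + s_{m-1} \ge 1$ fails then $(c-1)/c$ lies strictly inside $(s_{m-1}, s_m)$ and the mediant bound forces $c \ge Q_{m-1} + Q_m > Q_m$, contradicting $c \le Q_m$.

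The main obstacle I anticipate is recognising the Farey-neighbour property of consecutive convergents as the correct numerical lever; once that connection is made the argument is short and essentially mechanical, with the only subtlety being the edge-case analysis when one of the two legs has length one.
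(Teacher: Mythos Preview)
The paper does not supply its own proof of this lemma; it is quoted verbatim from Lecuona--Lisca \cite[Lemma~3.2]{MR2782538} and used as a black box, so there is no in-paper argument to compare against.

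That said, your proposed proof is correct. The two facts you rely on---strict monotonicity $0<r_1<\cdots<r_n<1$ of the successive convergents and the Farey relation $p_i q_{i-1}-p_{i-1}q_i=1$---are standard for Hirzebruch--Jung continued fractions with all entries $\le -2$, and follow by the routine induction you indicate. The heart of the argument, the reduction step, is sound: if both $r_{n-1}+s<1$ and $r+s_{m-1}<1$ failed simultaneously, then $1-s$ would lie strictly between the Farey neighbours $r_{n-1}$ and $r_n$, forcing $Q_m\ge q_{n-1}+q_n$ by the mediant bound, and symmetrically $q_n\ge Q_{m-1}+Q_m$; substituting gives the contradiction $0\ge q_{n-1}+Q_{m-1}$. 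Your edge case $n=1$ is also handled correctly, using that $(c-1)/c$ and $1/1$ are themselves Farey neighbours (implicitly) and the same denominator bound on $(s_{m-1},s_m)$. One small point worth stating explicitly in a polished write-up is that the standing hypothesis $a_i,b_j\le -2$, inherited from the conventions of Section~\ref{sec:preliminaries}, is precisely what drives both the monotonicity of the $r_i$ and the inequality $r_1,s_1\le 1/2$ needed in the base case.
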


\mainthm*
\iffalse
\begin{thm}\label{thm:necessary} Let $L = M(e; t_1, \ldots, t_p)$ be a Montesinos link in standard form, that is, where $t_i = \frac{\alpha_i}{\beta_i} > 1$ and $\alpha_i, \beta_i > 0$ are coprime for all $i = 1, \ldots, p$. Then $L$ is quasi-alternating if and only if
  \begin{enumerate}[(1)]
    \item $e < 1$, or
    \item $e = 1$ and $\frac{\alpha_i}{\alpha_i - \beta_i} > \frac{\alpha_j}{\beta_j}$ for some $i, j$ with $i \neq j$, or
    \item $e > p - 1$, or
      \item $e = p - 1$ and $\frac{\alpha_i}{\alpha_i - \beta_i} < \frac{\alpha_j}{\beta_j}$ for some $i, j$ with $i \neq j$.
  \end{enumerate}
\end{thm}
\fi

\begin{proof} If one of the conditions (1)-(4) is satisfied then $L$ is quasi-alternating by either of \cite[Theorem 5.3]{MR3403210} or \cite[Theorem 3.5]{MR3319679}, thus it suffices to show that if none of the conditions are satisfied then $L$ is not quasi-alternating. Thus, assume none of the conditions are satisfied, in particular $p \ge 2$.

  By \cite[Section 1.2.3]{MR1941324} (see also \cite[Proposition 4.1]{MR3403210}), we have that $$\mbox{det}(L) = \left|\alpha_1 \ldots \alpha_p \left(e - \sum_{i=1}^p \frac{\beta_i}{\alpha_i} \right)\right|.$$ If $p = 2$, since none of the conditions are satisfied we must have $e = 1$ and $\frac{\alpha_1}{\alpha_1 - \beta_1} = \frac{\alpha_2}{\beta_2}$. Hence, $\mbox{det}(L) = |\alpha_1 \alpha_2 (1 - \frac{\beta_1}{\alpha_1} - \frac{\beta_2}{\alpha_2})| = 0$,
  and so $L$ is not quasi-alternating (in fact $L$ must be the two component unlink). For the remainder of the argument we assume that $p \ge 3$, and $\mbox{det}(L) \neq 0$, that is, $e - \sum_{i=1}^p \frac{\beta_i}{\alpha_i} \neq 0$.

  First consider the case $1 < e < p - 1$. The reflection of $L$ is given by
  $$\overline{L} = M \left(-e, -\frac{\alpha_1}{\beta_1}, \ldots, -\frac{\alpha_p}{\beta_p} \right) = M \left(p - e, \frac{\alpha_1}{\alpha_1 - \beta_1}, \ldots, \frac{\alpha_p}{\alpha_p - \beta_p}\right),$$
  where the latter is written in standard form and $1 < p - e < p - 1$. Moreover, we see that a reflection reverses the sign of $e - \sum_{i=1}^p \frac{\beta_i}{\alpha_i}$ and thus by a reflection if necessary we may assume that $e - \sum_{i=1}^p \frac{\beta_i}{\alpha_i} > 0$. Then by Lemma \ref{lemma:laufer}, $\Sigma(L)$ is not an L-space, so $L$ is not quasi-alternating.

  It remains to consider the cases $e = 1$ and $e = p - 1$. By a reflection if necessary we may assume that $e = 1$. Note that conditions (2) and (4) are equivalent under a reflection. We assume that condition (2) is not satisfied. We need to prove that this implies that $L$ is not quasi-alternating. If $e - \sum_{i=1}^p \frac{\beta_i}{\alpha_i} > 0$ then by Lemma \ref{lemma:laufer}, $\Sigma(L)$ is not an L-space, and therefore $L$ is not quasi-alternating.

Otherwise $e - \sum_{i=1}^p \frac{\beta_i}{\alpha_i} < 0$. We have that $$L = M\left(1; \frac{\alpha_1}{\beta_1}, \ldots, \frac{\alpha_p}{\beta_p}\right) = M \left(1 - p; \frac{\alpha_1}{\beta_1 - \alpha_1}, \ldots, \frac{\alpha_p}{\beta_p - \alpha_p}\right),$$
where $\frac{\alpha_i}{\beta_i - \alpha_i} < -1$ for all $i$.

The double branched cover $\Sigma(L)$ of $L$ is therefore the boundary of a plumbing $4$-manifold $X_{\Gamma}$ on the standard star-shaped planar graph $\Gamma$ with central vertex of weight $-(p - 1)$ and legs corresponding to the fractions $\frac{\alpha_i}{\beta_i - \alpha_i}$, $i \in \{1,\ldots,p\}$.  Our assumption that $e - \sum_{i=1}^p \frac{\beta_i}{\alpha_i} < 0$ implies that $X_{\Gamma}$ is negative definite \cite[Theorem 5.2]{MR518415}. Suppose for the sake of contradiction that $L$ is quasi-alternating. Then $\overline{L}$ is quasi-alternating and $-\Sigma(L) = \Sigma(\overline{L})$ bounds a negative definite $4$-manifold $W$ with $H_1(W) = 0$ \cite[Proof of Lemma 3.6]{MR2141852}. By Donaldson's theorem \cite{MR910015}, the smooth closed negative definite $4$-manifold $X_{\Gamma} \cup W$ has diagonalisable intersection form. Thus, the map $H_2(X_{\Gamma})/\mbox{Tors} \hookrightarrow H_2(X_{\Gamma} \cup W)/\mbox{Tors}$ induced by the inclusion map is an embedding of the intersection lattice $(\Z^k, Q_\Gamma)$ of $X_{\Gamma}$ into the standard negative diagonal lattice $(\Z^n, -\mbox{Id})$ for some $n$. Denote by $e_1, \ldots, e_n$ a basis for $(\Z^n , -\mbox{Id})$.

We use the lattice embedding to identify elements of $(\Z^k, Q_\Gamma)$ with their image in $(\Z^n, -\mbox{Id})$. For convenience, we will not distinguish between a vertex of $\Gamma$ and the vector it corresponds to in the lattice. The central vertex $v$ of $\Gamma$ has weight $-(p - 1)$, and so $v \cdot e_i \neq 0$ for at most $p-1$ values of $i \in \{1, \ldots, n\}$. Thus, by applying an automorphism if necessary, we may assume that $v$ pairs non-trivially with precisely $e_1, \ldots, e_m$ where $m \le p - 1$. Since there are $p$ legs, by the pigeonhole principle there must exist some $e_j$, where $j \in \{1,\ldots, m\}$, and two distinct vertices $v_1, v_2$ adjacent to $v$ with $v_1 \cdot e_j \neq 0$ and $v_2 \cdot e_j \neq 0$. Without loss of generality we assume that $j = 1$ and that for $i \in \{1,2\}$, the vertex $v_i$ belongs to the $i$th leg of $\Gamma$, i.e. corresponding to the fraction $\frac{\alpha_i}{\beta_i - \alpha_i}$.

Since we are assuming condition (2) does not hold, we have that $\frac{\alpha_i}{\alpha_i - \beta_i} \le \frac{\alpha_j}{\beta_j}$ for all $i, j$ with $i \neq j$. In particular, we have $\frac{\alpha_1}{\alpha_1 - \beta_1} \le \frac{\alpha_2}{\beta_2}$. Rearranging this gives $\frac{\beta_1}{\alpha_1} + \frac{\beta_2}{\alpha_2} \le 1$. Note that the two legs correspond to the fractions $-1/r := -\frac{\alpha_1}{\alpha_1 - \beta_1} = [a_1^1, \ldots, a_{h_1}^1]$ and $-1/s := -\frac{\alpha_2}{\alpha_2 - \beta_2} = [a_1^2, \ldots, a_{h_2}^2]$, where $r,s\in \Q$, and where our notation is as in Section \ref{sec:preliminaries}. Thus, we have that $r + s = 2 - \frac{\beta_1}{\alpha_1} - \frac{\beta_2}{\alpha_2} \ge 1$. Since $r + s \ge 1$, by Lemma \ref{lemma:shorten_legs} there exist $h_1' \le h_1$ and $h_2' \le h_2$ such that $-1/r_0 = [a_1^1, \ldots, a_{h_1'}^1]$ and $-1/s_0 = [a_1^2,\ldots, a_{h_2'}^2]$ with $r_0 + s_0 = 1$.

Let $\Psi$ be the union of the linear graph containing the first $h_1'$ vertices of the first leg (where we count vertices in a leg starting away from the central vertex), and the linear graph containing the first $h_2'$ vertices of the second leg. By restricting our embedding of $(\Z^k, Q_{\Gamma})$, we have an embedding of the sublattice corresponding to $\Psi$ into $(\Z^n, -\mbox{Id})$. The image of this embedding is contained in a sublattice $(\Z^d, -\mbox{Id})$ of $(\Z^n, -\mbox{Id})$ spanned by $\{e_i \in \Z^n\;|\; e_i \cdot v \neq 0\mbox{ for some vertex }v\mbox{ of }\Psi\}$. Hence $U_{\Psi}$ consists of $d$ elements (see Lemma \ref{lemma:rigidity} for definition of $U_{\Psi}$). Let $v_1, w_1$ be the two vertices of $\Psi$ adjacent to the central vertex in $\Gamma$. By our choice of the two legs of $\Gamma$ which contain the vertices of $\Psi$, we know that $e_j \in U_{v_1} \cap U_{w_1}$ for some $j \in \{1,\ldots,n\}$. This shows that the hypothesis of Lemma \ref{lemma:rigidity} are satisfied, hence we conclude that $d = h_1' + h_2'$.

Let $A$ be the matrix representing the embedding $(\Z^k, Q_{\Gamma})$ into $(\Z^n, -\mbox{Id})$. Then the $h_1' + h_2'$ columns of $A$ corresponding to the vertices of $\Psi$ are supported in $d = h_1' + h_2'$ rows of $A$ corresponding to the $d$-dimensional sublattice of $(\Z^n, -\mbox{Id})$. Denote this $d \times d$ minor by $B$. Then $-B^TB$ is a matrix for the intersection form of the plumbing corresponding to $\Psi$. Hence $-B^TB$ is a presentation matrix for $H_1(Y)$ where $Y$ is the boundary of the (disconnected) plumbing corresponding to $\Psi$. The $3$-manifold $Y$ is the disjoint union of two lens spaces, each given by surgery on the unknot with framings $-1/r_0 < -1$ and $-1/s_0 < -1$ respectively. Therefore $|\mbox{det}(B)|^2 = |H_1(Y)| > 1$ contradicting Lemma \ref{lemma:greene_obstruction}. Thus, $L$ is not quasi-alternating.
\end{proof}

\maincorol*
\begin{proof} This is a corollary of the proof of Theorem \ref{thm:necessary}. Suppose first that $L$ is quasi-alternating. By \cite[Proposition 3.3]{MR2141852}, $\Sigma(L)$ is an L-space. Furthermore, $\Sigma(L)$ must bound a negative definite $4$-manifold $W_1$ with $H_1(W_1) = 0$ \cite[Proof of Lemma 3.6]{MR2141852}. Applying this to the reflection of $L$ which is also quasi-alternating, we get that $\Sigma(L)$ also bounds a positive definite $4$-manifold $W_2$ with $H_1(W_2) = 0$.
  For the converse, note that these two necessary conditions are the only conditions used to obstruct a Montesinos link from being quasi-alternating in the proof of Theorem \ref{thm:necessary}.
\end{proof}

As a consequence, we obtain a classification of the Seifert fibered spaces which are formal L-spaces. Before stating it, we recall the definition of a formal L-space. We say that a triple $(Y_1, Y_2, Y_3)$ of closed, oriented $3$-manifolds form a \emph{triad} if there is a $3$-manifold $M$ with torus boundary, and three oriented curves $\gamma_1,\gamma_2,\gamma_3 \subset \partial M$ at pairwise distance $1$, such that $Y_i$ is the result of Dehn filling $M$ along $\gamma_i$, for $i=1,2,3$.

\begin{mydef} The set $\mathcal{F}$ of formal L-spaces is the smallest set of rational homology $3$-spheres such that
  \begin{enumerate}[(1)]
    \item $S^3 \in \mathcal{F}$, and
    \item if $(Y, Y_0, Y_1)$ is a triad with $Y_0, Y_1 \in \mathcal{F}$ and $$|H_1(Y)| = |H_1(Y_0)| + |H_1(Y_1)|,$$ then $Y \in \mathcal{F}$.
  \end{enumerate}
\end{mydef}

\formallspace*

\begin{proof} Let $L$ be a quasi-alternating Montesinos link. Then the double branched cover of $L$ is a Seifert fibered space over $S^2$. Ozsv\'{a}th and Szab\'{o} show that the double branched cover of a quasi-alternating link is an L-space \cite[Proposition 3.3]{MR2141852}. Their proof in fact shows that the double branched cover of a quasi-alternating link is a formal L-space. Hence $\Sigma(L)$ is a formal L-space Seifert fibered space over $S^2$.

  Now let $M$ be a formal L-space Seifert fibered space over $S^2$. Then $M$ is the double branched cover of a Montesinos link $L$. Ozsv\'{a}th and Szab\'{o}'s in \cite[Proof of Lemma 3.6]{MR2141852} show that the double branched cover of a quasi-alternating link bounds both a positive definite, and a negative definite $4$-manifold with vanishing first homology. However, their proof in fact shows this for all formal L-spaces. Hence $M = \Sigma(L)$ is a formal L-space bounding positive and negative definite $4$-manifolds with vanishing first homology. Thus, Corollary \ref{cor:qa} implies that $L$ is quasi-alternating.
\end{proof}

\section*{Acknowledgements}
I would like to thank Cameron Gordon for his support and helpful conversations, and Duncan McCoy for his suggestions and many helpful comments. I would also like to thank the referee for useful feedback.

%\cleardoublepage
\phantomsection
%\addcontentsline{toc}{chapter}{Bibliography}
\bibliography{references}{}
\bibliographystyle{alpha}

\end{document}